\documentclass{amsart}
\usepackage{amsmath}
\usepackage{amssymb}
\usepackage{graphicx}
\usepackage{latexsym}
\usepackage{color}
\usepackage{amscd}
\usepackage[all]{xy}
\usepackage{enumerate}
\usepackage{enumitem}
\usepackage{hyperref}
\hypersetup{hidelinks}
\usepackage{subfigure}
\usepackage{flafter}
\usepackage{float}
\usepackage{soul}
\usepackage{comment}
\usepackage{epsfig}
\usepackage{epstopdf}
\usepackage{marginnote}
\usepackage{listings}
\usepackage{xcolor}

\usepackage{mparhack}
\usepackage{todonotes}
\usepackage[normalem]{ulem}

\usepackage{tikz}
\usetikzlibrary{arrows.meta,calc,decorations.markings,positioning}

\usepackage{multirow}

\newtheorem{thm}{Theorem}

\newtheorem{theorem}[thm]{Theorem}
\newtheorem{lemma}[thm]{Lemma}
\newtheorem{corollary}[thm]{Corollary}

\theoremstyle{definition}

\newtheorem*{definition*}{Definition}

\newtheorem{remark}[thm]{Remark}

\newcommand{\red}{\textcolor{red}}

\newcommand{\R}{\mathbb{R}}

\newcommand{\Z}{\mathbb{Z}}

\def \bdry {\partial}

\newcommand{\twprod}{\mathbin{\mathchoice%
    {\ooalign{\raise1.15ex\hbox{$\scriptstyle\sim$}\cr\hidewidth$\times$\hidewidth\cr}}%
    {\ooalign{\raise1.15ex\hbox{$\scriptstyle\sim$}\cr\hidewidth$\times$\hidewidth\cr}}%
    {\ooalign{\raise.85ex\hbox{$\scriptscriptstyle\sim$}\cr\hidewidth$\scriptstyle\times$\hidewidth\cr}}%
    {\ooalign{\raise.65ex\hbox{$\scriptscriptstyle\sim$}\cr\hidewidth$\scriptscriptstyle\times$\hidewidth\cr}}%
    }}

\newcommand{\M}{\rm{Mod}}

\usepackage{scalerel}
\newcommand{\DT}[1]{t_{\scaleto{\mathstrut #1}{9pt}}}

\allowdisplaybreaks[4]

\definecolor{alphacolor}{RGB}{0,91,150}
\definecolor{betacolor}{RGB}{102,83,124}
\definecolor{fixedcolor}{RGB}{0,112,91}
\definecolor{blockcolor}{RGB}{166,32,105}
\definecolor{lineone}{RGB}{28,92,151}
\definecolor{linetwo}{RGB}{178,92,0}
\definecolor{linethree}{RGB}{73,132,57}
\definecolor{linefour}{RGB}{129,73,153}
\definecolor{linefive}{RGB}{160,45,55}
\definecolor{auxblue}{RGB}{36,151,219}

\tikzset{
  pagebdry/.style={draw=black!78,fill=black!5,line width=.72pt,
    rounded corners=8pt},
  hole/.style={draw=black!78,fill=white,line width=.58pt},
  alpha curve/.style={draw=alphacolor,line width=1.15pt,
    line cap=round,line join=round},
  beta curve/.style={draw=betacolor,line width=1.15pt,
    line cap=round,line join=round},
  fixed curve/.style={draw=fixedcolor,line width=1.15pt,
    line cap=round,line join=round},
  block curve/.style={draw=blockcolor,line width=1.15pt,
    line cap=round,line join=round},
  auxiliary blue curve/.style={draw=auxblue,line width=1.15pt,
    line cap=round,line join=round},
  basisstem/.style={draw=black!52,line width=.56pt,
    line cap=round,line join=round},
  basisloop/.style={draw=black!72,line width=.72pt,
    line cap=round,line join=round},
  arrpoint/.style={circle,fill=black,inner sep=1.3pt}
}

\newcommand{\PageSetup}{%
  \coordinate (h1) at (-2,0);
  \coordinate (h2) at (-1,0);
  \coordinate (h3) at (0,0);
  \coordinate (h4) at (1,0);
  \coordinate (h5) at (2,0);
  \draw[pagebdry] (-2.78,-1.10) rectangle (2.78,1.10);
}
\newcommand{\DrawHoles}{%
  \foreach \i in {1,...,5}{\draw[hole] (h\i) circle (.155);}
}

\newcommand{\CurveAlpha}{\draw[alpha curve]
 (-2.38,0) .. controls (-2.38,.47) and (-.62,.47) .. (-.62,0)
 .. controls (-.62,-.47) and (-2.38,-.47) .. cycle;}
\newcommand{\CurveBeta}{\draw[beta curve]
 (-.38,0) .. controls (-.38,.47) and (1.38,.47) .. (1.38,0)
 .. controls (1.38,-.47) and (-.38,-.47) .. cycle;}
\newcommand{\CurveU}{\draw[block curve]
 (-1.38,0) .. controls (-1.38,.47) and (.38,.47) .. (.38,0)
 .. controls (.38,-.47) and (-1.38,-.47) .. cycle;}
\newcommand{\CurveD}{\draw[fixed curve]
  (-2.42,0)
  .. controls (-2.42,.48) and (-2.12,.56) .. (-1.72,.56)
  .. controls (-1.46,.56) and (-1.36,-.34) .. (-1.00,-.34)
  .. controls (-.64,-.34) and (-.54,.56) .. (-.28,.56)
  .. controls (.12,.56) and (.42,.48) .. (.42,0)
  .. controls (.42,-.48) and (.12,-.58) .. (-.28,-.58)
  .. controls (-.65,-.60) and (-1.35,-.60) .. (-1.72,-.58)
  .. controls (-2.12,-.58) and (-2.42,-.48) .. cycle;}
\newcommand{\CurveV}{\draw[block curve]
  (-1.42,0)
  .. controls (-1.42,.45) and (-1.12,.56) .. (-.72,.56)
  .. controls (-.46,.56) and (-.36,-.34) .. (0,-.34)
  .. controls (.36,-.34) and (.46,.56) .. (.72,.56)
  .. controls (1.20,.58) and (2.42,.48) .. (2.42,0)
  .. controls (2.42,-.48) and (1.60,-.58) .. (.72,-.58)
  .. controls (.20,-.58) and (-.20,-.58) .. (-.72,-.58)
  .. controls (-1.12,-.58) and (-1.42,-.45) .. cycle;}
\newcommand{\CurveW}{\draw[block curve]
  (-2.42,0)
  .. controls (-2.42,.50) and (-1.40,.62) .. (0,.62)
  .. controls (1.40,.62) and (2.42,.50) .. (2.42,0)
  .. controls (2.42,-.48) and (2.15,-.56) .. (1.60,-.56)
  .. controls (1.38,-.56) and (1.30,.34) .. (1.00,.34)
  .. controls (.70,.34) and (.62,-.56) .. (.40,-.56)
  .. controls (.18,-.58) and (-.18,-.58) .. (-.40,-.56)
  .. controls (-.62,-.56) and (-.70,.34) .. (-1.00,.34)
  .. controls (-1.30,.34) and (-1.38,-.56) .. (-1.60,-.56)
  .. controls (-2.15,-.56) and (-2.42,-.48) .. cycle;}
\newcommand{\CurveZ}{\draw[block curve]
  (-2.42,0)
  .. controls (-2.42,.48) and (-1.50,.60) .. (-.55,.60)
  .. controls (-.34,.60) and (-.30,-.34) .. (0,-.34)
  .. controls (.30,-.34) and (.34,.60) .. (.55,.60)
  .. controls (.82,.60) and (1.42,.48) .. (1.42,0)
  .. controls (1.42,-.48) and (.35,-.60) .. (-.45,-.60)
  .. controls (-.66,-.60) and (-.70,.34) .. (-1.00,.34)
  .. controls (-1.30,.34) and (-1.34,-.60) .. (-1.55,-.60)
  .. controls (-2.15,-.60) and (-2.42,-.48) .. cycle;}

\newcommand{\CurveAThirteen}{\draw[auxiliary blue curve]
  (-2.42,0)
  .. controls (-2.42,.48) and (-2.15,.72) .. (-1.76,.72)
  .. controls (-1.38,.72) and (-.62,.72) .. (-.24,.72)
  .. controls (.15,.72) and (.42,.48) .. (.42,0)
  .. controls (.42,-.28) and (.18,-.34) .. (-.12,-.34)
  .. controls (-.38,-.34) and (-.38,.52) .. (-.66,.52)
  .. controls (-.88,.52) and (-1.12,.52) .. (-1.34,.52)
  .. controls (-1.62,.52) and (-1.62,-.34) .. (-1.88,-.34)
  .. controls (-2.18,-.34) and (-2.42,-.28) .. cycle;}

\newcommand{\CurveATwentyFour}{\draw[auxiliary blue curve]
  (-1.42,0)
  .. controls (-1.42,.28) and (-1.18,.32) .. (-.92,.32)
  .. controls (-.66,.32) and (-.56,-.58) .. (-.32,-.58)
  .. controls (-.12,-.58) and (.12,-.58) .. (.32,-.58)
  .. controls (.56,-.58) and (.66,.32) .. (.92,.32)
  .. controls (1.18,.32) and (1.42,.28) .. (1.42,0)
  .. controls (1.42,-.48) and (1.16,-.82) .. (.78,-.82)
  .. controls (.52,-.82) and (-.52,-.82) .. (-.78,-.82)
  .. controls (-1.16,-.82) and (-1.42,-.48) .. cycle;}

\newcommand{\CurveGamma}{\draw[auxiliary blue curve]
  (-2.48,0)
  .. controls (-2.48,.76) and (1.48,.76) .. (1.48,0)
  .. controls (1.48,-.76) and (-2.48,-.76) .. cycle;}

\newcommand{\CurveLambda}{\draw[auxiliary blue curve]
  (-2.50,0)
  .. controls (-2.50,.48) and (-2.20,.58) .. (-1.80,.58)
  .. controls (-1.30,.60) and (-.70,.58) .. (-.48,.52)
  .. controls (-.28,.46) and (-.30,-.34) .. (0,-.34)
  .. controls (.30,-.34) and (.28,.46) .. (.48,.52)
  .. controls (.70,.58) and (1.45,.58) .. (1.45,0)
  .. controls (1.45,-.50) and (.90,-.62) .. (.48,-.60)
  .. controls (-.20,-.62) and (-1.20,-.62) .. (-1.80,-.60)
  .. controls (-2.20,-.58) and (-2.50,-.48) .. cycle;}

\newcommand{\CurveMu}{\draw[auxiliary blue curve]
  (-1.48,0)
  .. controls (-1.48,.48) and (1.48,.48) .. (1.48,0)
  .. controls (1.48,-.48) and (-1.48,-.48) .. cycle;}

\newcommand{\DrawBasis}{%
  \coordinate (bp) at (2.48,-.78);
  \foreach \xx/\ii/\yy in {
    -2/1/-.96,
    -1/2/-.89,
     0/3/-.82,
     1/4/-.75,
     2/5/-.68}{%
    \draw[basisstem] (bp)
      .. controls (2.28,\yy) and ({\xx+.20},\yy) .. (\xx,-.28);
    \draw[basisloop] (\xx,0) circle (.27);
    \draw[basisloop,-{Stealth[length=2.3pt,width=2.1pt]}]
      ($ (\xx,0)+(25:.27) $)
      arc[start angle=25,end angle=78,radius=.27];
    \node[font=\tiny] at (\xx,.48) {$x_{\ii}$};
  }%
  \node[font=\scriptsize,inner sep=0pt] at (bp) {$*$};
}

\begin{document}

\title[Planar contact $3$--manifolds with infinitely many Stein fillings]
{Planar contact $3$--manifolds with infinitely many Stein fillings}

\author[R. \.{I}. Baykur]{R. \.{I}nan\c{c} Baykur}
\address{Department of Mathematics and Statistics, University of Massachusetts, Amherst, MA 01003, USA}
\email{inanc.baykur@umass.edu}

\begin{abstract}
We prove that there are infinitely many closed contact $3$--manifolds supported
by planar open books, each admitting infinitely many pairwise non-homeomorphic
Stein fillings. This answers K3 Problem 4.105. As a corollary, there are contact $3$--manifolds that admit infinitely
many Stein fillings but do not admit arbitrarily large ones.
\end{abstract}

\maketitle

\setcounter{secnumdepth}{2}
\setcounter{section}{0}

\section{Introduction}

Planar contact $3$--manifolds, namely closed contact $3$--manifolds
supported by genus-zero open books, are subject to strong constraints on
the topology of their symplectic fillings.  The decisive structural result
is Wendl's theorem: every minimal strong symplectic filling of a contact
$3$--manifold supported by a fixed planar open book is, up to symplectic
deformation, an allowable Lefschetz fibration with the same planar fiber
\cite{Wendl2010}.  Weak symplectic fillings reduce to the same setting after
deformation \cite{NiederkrugerWendl2011}, while positive allowable
Lefschetz fibrations over the disk carry Stein structures
\cite{LoiPiergallini2001,AkbulutOzbagci2001}.  Thus the filling problem reduces to the study of
positive Dehn twist factorizations of a planar mapping class.

The intersection form of every such filling is negative definite
\cite{Etnyre2004}.  For Stein fillings, the second Betti number is uniformly
bounded \cite{Plamenevskaya2012}; further restrictions on minimal
symplectic fillings, including obstructions to certain intersection patterns
and to symplectic surfaces of positive genus, were obtained in
\cite{GhigginiGollaPlamenevskaya2020}.  The Euler characteristic and
signature are bounded \cite{Wand2012,Kaloti2015,LisiWendl2021}, and for a
fixed planar monodromy only finitely many integral homology groups can occur
\cite{BaykurMondenVHM2017,BaykurVHM2018}.  Moreover, many standard planar
families, including lens spaces and links of certain rational surface
singularities, have at most finitely many fillings; see e.g.
\cite{Lisca2008,PlamenevskayaVHM2010,BhupalOzbagci2016,
Plamenevskaya2022,Schonenberger2007,EtnyreRoy2021}.

Indeed, it has been conjectured that every planar contact
$3$--manifold should admit only finitely many deformation classes of minimal symplectic fillings \cite{LisiWendl2021}; this question also constitutes the $K3$ problem  \cite[Problem~4.105]{K3ProblemList}.  We disprove this conjecture with the following result.

\begin{theorem}\label{thm:main}
There are infinitely many planar contact $3$--manifolds, each admitting
infinitely many pairwise non-homeomorphic Stein fillings.  Our simplest example
is supported by a planar open book whose page has six boundary components.  Its infinitely many Stein
fillings $X_n$, $n\geq 0$, satisfy
\[
 H_0(X_n;\Z)=H_1(X_n;\Z)=H_2(X_n;\Z)=\Z,
 \qquad
 \pi_1(X_n)\cong\Z/(2n+1)\rtimes_{-1}\Z.
\]
\end{theorem}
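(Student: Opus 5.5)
We work on the planar surface $P$ drawn in the figures: a disk with five holes $h_1,\dots,h_5$, so the page has six boundary components. The plan is to exhibit one planar mapping class $\phi$ of $P$ that has infinitely many positive Dehn twist factorizations $W_n$, $n\ge 0$. The fillings $X_n$ are then the corresponding allowable Lefschetz fibrations over the disk. They are Stein by \cite{LoiPiergallini2001,AkbulutOzbagci2001}, and they all fill the contact structure supported by the open book $(P,\phi)$. Nothing beyond these standard facts is needed for fillability; the content of the theorem lies in (a) producing the factorizations and (b) distinguishing the $X_n$.

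For (a), I would start from a relation such as the lantern relation or a chain-type relation among curves enclosing consecutive holes. The curves $\alpha,\beta,U,D,V,W,Z$ in the figures are of this kind. The goal is a factorization $W_0$ in which some subword $\DT{a}\DT{b}$ appears, with $a,b$ two curves whose twists do not commute. A partial conjugation (Hurwitz moves combined with a global conjugation) by a mapping class $h$ that commutes with $\phi$ then turns $W_0$ into $W_n$. Two ways to arrange this:
\begin{enumerate}
\item Rewrite $\phi=\Psi\cdot \DT{a}\DT{b}\cdots$, where the remaining part commutes with $h=\DT{a}\DT{b}$ (or with a twist about a curve $\gamma$ disjoint from the other vanishing cycles), and replace a block $B$ by $h^nBh^{-n}$.
\item Use a relation $\DT{u}\DT{v}=\DT{u'}\DT{v'}$, as in the lantern relation, so that conjugating one block by powers of $\DT{\gamma}$ leaves the product unchanged while the vanishing cycles change.
\end{enumerate}
The auxiliary curves $\gamma,\lambda,\mu,A_{13},A_{24}$ are presumably the conjugated cycles. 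I would fix the word by requiring that the total product is a fixed planar class $\phi$ and that the number of twists is constant, which gives $b_2$ constant.

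For (b), compute the topology from the handle decomposition. $\pi_1(P)$ is free on $x_1,\dots,x_5$, and
\[
\pi_1(X_n)=\langle x_1,\dots,x_5 \mid [c]=1,\ c\in W_n\rangle .
\]
Each vanishing cycle $c$ in $P$ reads as a product of the $x_i$. The homology follows from $H_1$ of the presentation, together with $\chi(X_n)=1-5+k$, where $k$ is the number of twists. So $k=5$ gives $b_2=1$ once $H_1=\Z$. Negative definiteness, by \cite{Etnyre2004}, forces $H_2$ to be free, and we need $H_2=\Z$.

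The main obstacle is arranging that the relators collapse the group to
\[
\langle s,t \mid tst^{-1}=s^{-1},\ s^{2n+1}=1\rangle .
\]
The conjugated cycle $h^n(c)$ should contribute a relator like $s^{2n+1}$, coming from a word of the form $x(yx)^n$, while the other cycles impose $tst^{-1}=s^{-1}$ and kill the remaining generators. Since this group has abelianization $\Z\oplus\Z/2$ only when $2n+1$ is even, and $2n+1$ is odd here, the abelianization is $\Z$, as required. The groups $\Z/(2n+1)\rtimes_{-1}\Z$ are pairwise non-isomorphic: their torsion subgroups have different orders. Hence the $X_n$ are pairwise non-homeomorphic.

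For infinitely many contact manifolds, I would stabilize the example, adding holes and twists disjoint from the construction, or vary a parameter in $\phi$. The supported contact structures can be distinguished by the homology of the boundary or by Euler characteristic bounds \cite{Plamenevskaya2012}.
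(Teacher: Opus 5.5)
Your outline has the right overall shape: one planar monodromy with infinitely many positive factorizations, Stein total spaces by Loi--Piergallini and Akbulut--Ozbagci, distinction via the vanishing-cycle presentation of $\pi_1$, and $\chi(X_n)=-4+5=1$. But it does not prove the theorem, because everything that makes the statement true is deferred to ``the main obstacle.''

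The missing idea is a concrete relation in $\Gamma_0^6$ giving a block of twists that commutes with a Dehn twist which does not commute with the rest of the factorization. The paper supplies this as the generalized lantern relation
\[
t_u t_v t_w t_z t_\alpha t_\beta = t_{\delta_0}t_{\delta_1}^2t_{\delta_2}^2t_{\delta_3}^2t_{\delta_4}^2t_{\delta_5},
\]
proved either from Hironaka's line-arrangement relation or from four ordinary lanterns. Since the right-hand side is central, $B=t_ut_vt_wt_z$ commutes with $t_\alpha t_\beta$. Uniqueness of multitwists, together with the fact that $B$ fixes each boundary component, then gives $B(\alpha)=\alpha$, so $[B,t_\alpha]=1$. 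Hence $P_n=t_d\,t_\alpha^nBt_\alpha^{-n}$ are positive factorizations of the fixed class $\phi=t_dB$, where $i(d,\alpha)>0$.

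Only with these specific curves can the group be computed. The quotient $F_5/N(u,v,w,z)$ is the Klein bottle group $\langle r,b\mid brb^{-1}=r^{-1}\rangle$. The single moving relator $t_\alpha^{-n}(d)=x_3p^{-n}x_1p^n$, with $p=x_2x_1$, reduces there to $r^{-(2n+1)}$. Your guess of a relator of the form $x(yx)^n$ is in this spirit, but you neither produce curves nor verify the collapse. So neither the existence of the $X_n$ nor the stated $\pi_1$ and homology is established.

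Several of your heuristics also point the wrong way.
\begin{itemize}
\item Partial conjugation is \emph{not} a combination of Hurwitz moves and a global conjugation. Those moves preserve the isomorphism class of the Lefschetz fibration, so they can never yield non-homeomorphic fillings.
\item An $h$ that merely commutes with $\phi$ only gives a global conjugation. What is needed is an $h$ commuting with a proper block while moving a vanishing cycle outside it.
\item In the paper $\alpha$ and $\beta$ are disjoint, and neither $t_\alpha$ nor $t_\beta$ occurs in $P_n$.
\item The curves $\gamma,\lambda,\mu,a_{13},a_{24}$ are only auxiliary curves for the second proof of the lemma, not conjugated vanishing cycles.
\end{itemize}

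There are also two smaller points. First, $H_2(X_n)$ is free because the handle decomposition has no $3$--handles, not because of negative definiteness; the intersection form only sees $H_2$ modulo torsion. Second, for infinitely many contact manifolds you must say what you do and check that the fillings stay distinct. The paper takes $Y_m=Y\#^m(S^1\times S^2)$, whose Stein fillings include $X_n\natural\bigl(\natural^m(S^1\times D^3)\bigr)$ with $\pi_1\cong\pi_1(X_n)*F_m$. These are distinguished by the largest finite order $2n+1$, and the $Y_m$ are distinguished by $b_1$. Plamenevskaya's Euler characteristic bound plays no role.
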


\noindent Since Stein fillings are symplectically minimal, the
pairwise non-homeomorphic fillings in Theorem~\ref{thm:main} also give pairwise distinct deformation classes of minimal symplectic fillings.

Many contact $3$--manifolds, including all tight contact structures on lens
spaces, admit only finitely many Stein fillings up to diffeomorphism.  At the other extreme,
many contact $3$--manifolds supported by higher genus open books admit
arbitrarily large Stein fillings, that is, fillings with unbounded Euler
characteristic \cite{BaykurVHM2015,BaykurVHM2016}.  Together with the uniform boundedness of the planar fillings, Theorem~\ref{thm:main} shows that the second inclusion below is also strict, and reveals the existence of a new class of contact $3$--manifolds:  
\begin{equation*}
 \{\text{Stein fillable}\}
 \supsetneq
 \{\text{infinitely Stein fillable}\}
 \, \red{\supsetneq} \,
 \{\text{arbitrarily large Stein fillable}\}.
\end{equation*}

\begin{corollary}\label{cor:bounded-infinite}
There are infinitely many contact $3$--manifolds that admit infinitely many
pairwise non-homeomorphic Stein fillings but do not admit arbitrarily large
Stein fillings.
\end{corollary}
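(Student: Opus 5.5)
The corollary should follow directly from Theorem~\ref{thm:main} and the uniform bounds on planar fillings recalled in the introduction. Theorem~\ref{thm:main} supplies infinitely many planar contact $3$--manifolds $(Y,\xi)$. Each of them has infinitely many pairwise non-homeomorphic Stein fillings. It therefore remains to show that none of these $(Y,\xi)$ admits arbitrarily large Stein fillings, meaning fillings with unbounded Euler characteristic.

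For this step I would fix one such $(Y,\xi)$, supported by a planar open book with page $P$ and monodromy $\phi$. By Wendl's theorem \cite{Wendl2010}, any Stein filling $X$ of $(Y,\xi)$ is minimal, so up to deformation it is an allowable Lefschetz fibration over $D^2$ with fiber $P$. Its monodromy factorizes $\phi$ into positive Dehn twists. Hence
\[
\chi(X) = \chi(D^2)\chi(P) + \ell = \chi(P) + \ell,
\]
where $\ell$ is the number of Dehn twists in the factorization. It suffices to bound $\ell$ uniformly over all positive factorizations of $\phi$.

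One route is to quote the Euler characteristic bound for planar fillings \cite{Wand2012,Kaloti2015,LisiWendl2021} directly. Alternatively, the bound follows from Plamenevskaya's uniform bound on $b_2$ \cite{Plamenevskaya2012}. A planar allowable Lefschetz fibration has $b_1(X)\le b_1(P)$, which is fixed, and $b_3=b_4=0$ because $X$ is a $2$--handlebody. So bounding $b_2$ bounds $\chi$, and $\chi$ is a topological invariant that stays bounded across all Stein fillings.

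Either way, the collection of Stein fillings of each such $(Y,\xi)$ has bounded Euler characteristic. So these contact manifolds are infinitely Stein fillable but not arbitrarily large Stein fillable. Since Theorem~\ref{thm:main} provides infinitely many of them, the corollary follows.

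I expect no serious obstacle, since everything reduces to citing the bounds quoted above. The one point to check is that the quoted bound really is uniform over \emph{all} Stein fillings of a fixed planar contact manifold, not just those arising from one fixed factorization. Wendl's theorem guarantees this, because every filling arises from some positive factorization of the same $\phi$.
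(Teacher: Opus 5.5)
Your proposal is correct and follows the paper's route: the paper obtains the corollary directly from Theorem~\ref{thm:main} together with the uniform bounds on $b_2$ and Euler characteristic of Stein fillings of planar contact manifolds \cite{Plamenevskaya2012,Wand2012,Kaloti2015,LisiWendl2021}, and your use of Wendl's theorem and $\chi(X)=\chi(P)+\ell$ only spells out why that bound is uniform over all fillings. One small correction: the minimality of Stein fillings is a standard fact and not part of Wendl's theorem \cite{Wendl2010}; Wendl's theorem is what you then apply to those minimal fillings.
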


\section{Proofs}

Let $\Sigma=\Sigma_0^6$ be a disk with an outer boundary $\delta_0$ and five inner boundary components
$\delta_1,\ldots,\delta_5$, ordered from left to right in the figures below, and set $
 \Gamma_0^6:=\M(\Sigma,\bdry\Sigma)$ for shorthand.
Products in this article are written in functional order, and we use $\psi\DT{c}\psi^{-1}=\DT{\psi(c)}$ for the conjugation of the right-handed Dehn twist $t_c$ along $c$ by the mapping class $\psi\in\Gamma_0^6$.

We first record the following generalized lantern relation. Note that the only interesting relations that may occur on a planar surface are lantern generalizations.

\begin{lemma}[A generalized lantern relation]\label{lem:key-relation}
In $\Gamma_0^6$,
\begin{equation}\label{eq:key-relation}
 \DT{u}\DT{v}\DT{w}\DT{z}\DT{\alpha}\DT{\beta}
 =
 \DT{\delta_0}\DT{\delta_1}^{2}\DT{\delta_2}^{2}
 \DT{\delta_3}^{2}\DT{\delta_4}^{2}\DT{\delta_5}.
\end{equation}
\end{lemma}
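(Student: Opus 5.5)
The approach is to check the relation through the faithful action of $\Gamma_0^6$ on the fundamental group, and to use a chain of lantern relations as an independent cross-check. Take the base point $*$ on the outer boundary $\delta_0$, and the free basis $x_1,\dots,x_5$ of $\pi_1(\Sigma,*)\cong F_5$ drawn in the basis figure. Here $x_i$ runs along a stem to a small loop around $\delta_i$. Since $\Sigma$ is a disk with holes and the boundary is fixed pointwise, the action
\[
\Gamma_0^6\to \mathrm{Aut}(F_5)
\]
is injective. This is Artin's faithfulness theorem for braids, extended to framed braids: the boundary twist $\DT{\delta_i}$ acts by conjugating $x_i$ by itself on the arc side, and this is detected once we keep track of the arcs from $*$ rather than only free loops. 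Both sides of \eqref{eq:key-relation} are products of twists, so it suffices to show they induce the same automorphism of $F_5$.

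First I will read off, for each of the curves $u,v,w,z,\alpha,\beta$, the word in $x_1,\dots,x_5$ it represents, say $W_c$ for the curve $c$. Reading the pictures:
\begin{itemize}
\item $\alpha$ encloses $\delta_1,\delta_2$ and $\beta$ encloses $\delta_3,\delta_4$; these are convex, so $W_\alpha=x_1x_2$ and $W_\beta=x_3x_4$ up to orientation convention.
\item $u$ encloses $\delta_2,\delta_3$, giving $W_u=x_2x_3$.
\item $v$ encloses $\delta_2,\delta_4,\delta_5$ but passes below $\delta_3$.
\item $w$ encloses $\delta_1,\delta_3,\delta_5$, passing above $\delta_2$ and $\delta_4$.
\item $z$ encloses $\delta_1,\delta_4$, passing above $\delta_2$ and below $\delta_3$.
\end{itemize}
For the non-convex curves I will write them as images of convex ones under explicit half-twists. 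For example, $v=\sigma(c_{\{2,3,4\}}\text{-type curve})$ for a suitable half-twist $\sigma$ exchanging adjacent holes. This gives $W_v,W_w,W_z$ as conjugated products, e.g. $W_z=x_1\,x_2x_3x_4x_3^{-1}x_2^{-1}$ or similar depending on the over/under pattern.

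The action of a twist is then standard. $\DT{c}$ fixes every $x_j$ with $\delta_j$ outside $c$ (choosing stems disjoint from $c$ where possible), and sends $x_j\mapsto W_c x_j W_c^{-1}$ for $\delta_j$ inside $c$. When the stem of $x_j$ crosses $c$, I instead apply the twist to the arc directly.

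The right-hand side is easy to compute. The $\DT{\delta_i}$ commute and each conjugates only $x_i$ by $x_i$, while $\DT{\delta_0}$ conjugates every generator by $x_1x_2x_3x_4x_5$. So the right side sends
\[
x_i\mapsto (x_1\cdots x_5)\,x_i^{k_i}\,x_i\,x_i^{-k_i}(x_1\cdots x_5)^{-1}=(x_1\cdots x_5)\,x_i\,(x_1\cdots x_5)^{-1},
\]
where the $k_i$ are the exponents in \eqref{eq:key-relation}. The exponents on the $\delta_i$ matter only because a boundary twist acts trivially on loops but nontrivially on the framing/arc. To detect them faithfully I will also track the images of arcs from $*$ to each $\delta_i$ (equivalently, work in the framed braid group), not just the free generators.

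As a sanity check before computing, I will compare the homomorphism to $\Z^{\binom{5}{2}}\oplus\Z^5$ that counts, for each curve, which pairs and singletons of holes it encloses. The singleton counts already agree: each of $\delta_1,\dots,\delta_4$ is enclosed by three curves on the left and $\delta_5$ by two.

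The main obstacle is bookkeeping: getting the correct words $W_v,W_w,W_z$ and the crossings of stems with the wiggly curves, where one sign or conjugation error ruins the check. As an independent route I would also try to derive \eqref{eq:key-relation} by chaining lantern relations. Apply the lantern on the four-holed sphere bounded by $\alpha,\beta,\ldots$ to trade $\DT{\alpha}\DT{\beta}$ for twists around $\delta$'s and a convex curve. Then use conjugation $\psi\DT{c}\psi^{-1}=\DT{\psi(c)}$ to move $\DT{u}$ past the others, and apply two further lanterns to absorb $v,w,z$. In the end all interior curves cancel, leaving only boundary twists with the stated exponents.
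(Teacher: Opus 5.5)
Your route, checking the relation in a faithful model instead of assembling it from lanterns (the paper uses Hironaka's line-arrangement relation, or four explicit lanterns), is viable. However, the reduction you state is false: $\Gamma_0^6\to\mathrm{Aut}(F_5)$ is not injective. With $*$ on $\delta_0$, every based loop can be pushed off a collar of $\delta_i$, so $t_{\delta_1},\dots,t_{\delta_5}$ act trivially. Your own right-hand-side computation shows this, since the exponents $2,2,2,2,1$ drop out. In fact the action factors through capping the inner boundaries by punctured disks, $\Gamma_0^6\to PB_5$. The kernel of this map is $\langle t_{\delta_1},\dots,t_{\delta_5}\rangle\cong\Z^5$, and $PB_5$ acts faithfully by Artin's theorem. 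So equal automorphisms of $F_5$ give the lemma only up to a factor $\prod_{i=1}^5 t_{\delta_i}^{m_i}$, which is blind to exactly the boundary exponents being asserted.

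Tracking arcs from $*$ to the $\delta_i$ would be faithful, but the repair is already in your proposal: your ``sanity check'' is a homomorphism. Capping every inner boundary except $\delta_i$ by a disk gives $f_i\colon\Gamma_0^6\to\mathrm{Mod}(\text{annulus})\cong\Z$, with $f_i(t_c)=1$ or $0$ according as $c$ encloses $\delta_i$ or not. Since $f_i(t_{\delta_j})$ is $1$ for $j=i$ and $0$ otherwise, $(f_1,\dots,f_5)$ is injective on that kernel. Both sides of the relation map to $(3,3,3,3,2)$. So equality of the $F_5$-actions together with your count would be a complete proof.

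The genuine gap is that this equality of actions, which on your route is the entire content of the lemma, is never established. Three things are missing:
\begin{itemize}
\item The words for $v,w,z$ are left undetermined; your $W_z$ is explicitly a guess.
\item The stem-crossing computations you defer are exactly where the non-convex curves matter. The stems in Figure~\ref{fig:curves}(h) come from below, so, for instance, the stem of $x_3$ crosses the band of $z$ passing below $\delta_3$, and $t_z$ does not fix $x_3$.
\item The six automorphisms are never composed and compared with conjugation by the boundary word.
\end{itemize}
The abelian counts cannot stand in for this, since they do not see the order of the factors. For example, swapping $t_u$ and $t_v$ changes the product, because $u$ and $v$ intersect.

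The lantern fallback is too schematic to count as a proof either, and its first step is misdescribed. For one lantern to absorb $t_\alpha t_\beta$, both curves must be boundary curves, which forces the four-holed sphere bounded by $\alpha,\beta,\delta_5,\delta_0$. That lantern rewrites $t_\alpha t_\beta$ as $t_{\delta_0}^{-1}t_{\delta_5}^{-1}$ times three interior twists, about curves enclosing $\{1,2,3,4\}$, $\{3,4,5\}$ and $\{1,2,5\}$. It does not give boundary twists and a single convex curve, and the ``two further lanterns'' are never specified. For comparison, the paper's chain uses four lanterns on subsurfaces cut out by the auxiliary curves $a_{13},a_{24},\gamma,\lambda,\mu$. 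It adds a cyclic permutation justified by the centrality of the boundary multitwist of the subsurface bounded by $\gamma$, and so needs no $\pi_1$ bookkeeping at all.
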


\begin{proof}
We give two proofs, the first of which reveals how we discovered this relation.

\smallskip
\noindent\emph{First proof via line arrangements.}
Consider the five real lines in $\R^2$
\begin{align*}
 L_1&:\ 8x-3y+7=0, &
 L_2&:\ 3x-2y+7=0,\\
 L_3&:\ x-7y-19=0, &
 L_4&:\ x+8y+11=0,\\
 L_5&:\ 2x+y+7=0.
\end{align*}
The arrangement is shown in Figure~\ref{fig:arrangement}.

\begin{figure}[H]
\centering
\begin{tikzpicture}[x=.56cm,y=.37cm]
  \begin{scope}
  \clip (-5.05,-3.90) rectangle (5.55,5.85);
  \draw[lineone,line width=.95pt] (-2.20,-3.533)--(1.32,5.853);
  \draw[linetwo,line width=.95pt] (-4.90,-3.850)--(1.45,5.675);
  \draw[linethree,line width=.95pt] (-4.95,-3.421)--(5.45,-1.936);
  \draw[linefour,line width=.95pt] (-3.45,-.944)--(5.45,-2.056);
  \draw[linefive,line width=.95pt] (-3.45,-.100)--(-1.70,-3.600);
  \end{scope}
  \node[arrpoint] at (5,-2) {};
  \node[font=\scriptsize,fill=white,inner sep=1pt] at (5.17,-1.63) {$p_1$};
  \node[arrpoint] at (1,5) {};
  \node[font=\scriptsize,fill=white,inner sep=1pt] at (1.43,5.23) {$p_2$};
  \node[arrpoint] at (-1.328,-1.209) {};
  \node[font=\scriptsize,fill=white,inner sep=1pt] at (-1.73,-.72) {$p_3$};
  \node[arrpoint] at (-2,-3) {};
  \node[font=\scriptsize,fill=white,inner sep=1pt] at (-1.57,-3.38) {$p_4$};
  \node[arrpoint] at (-3,-1) {};
  \node[font=\scriptsize,fill=white,inner sep=1pt] at (-3.48,-.55) {$p_5$};
  \node[arrpoint] at (-4.579,-3.368) {};
  \node[font=\scriptsize,fill=white,inner sep=1pt] at (-4.90,-3.02) {$p_6$};
  \node[font=\scriptsize,text=lineone,fill=white,inner sep=1pt]
    at (.15,2.55) {$L_1$};
  \node[font=\scriptsize,text=linetwo,fill=white,inner sep=1pt]
    at (-.20,3.95) {$L_2$};
  \node[font=\scriptsize,text=linethree,fill=white,inner sep=1pt]
    at (2.75,-2.22) {$L_3$};
  \node[font=\scriptsize,text=linefour,fill=white,inner sep=1pt]
    at (3.80,-1.82) {$L_4$};
  \node[font=\scriptsize,text=linefive,fill=white,inner sep=1pt]
    at (-2.32,-2.25) {$L_5$};
\end{tikzpicture}
\caption{The line arrangement.  Hironaka's positive factors are
ordered from the leftmost point to the rightmost point.}
\label{fig:arrangement}
\end{figure}

We spell out the curve convention.  Choose a vertical fiber
$x=x_0$ to the right of all the intersection points.  Its intersections
with $L_1,\ldots,L_5$, ordered from top to bottom, initially occur in the
order
\[
  1\,2\,3\,4\,5.
\]
Move the vertical fiber from right to left, so that it passes through
$p_1,\ldots,p_6$ in this order.  Upon crossing $p_i$, the lines through
$p_i$ form a consecutive block in the current vertical ordering, and
that block is reversed.  Here the entries in $(i,j)$ refer to the
\emph{current positions} of the strands, not to the labels of the lines.
The successive orders, together with the incidence data and the associated
curves, are
\begin{center}
\renewcommand{\arraystretch}{1.18}
\setlength{\tabcolsep}{6pt}
\begin{tabular*}{\textwidth}{@{\extracolsep{\fill}}c|c|c|c|c|c@{}}
$p_i$ & \text{curve} & \text{lines through }$p_i$
& \text{order before crossing}
& \text{reversed block}
& \text{order after crossing}\\ \hline
$p_1$&$\beta$ &$L_3,L_4$       &$12345$ &$(3,4)$&$12435$\\
$p_2$&$\alpha$&$L_1,L_2$       &$12435$ &$(1,2)$&$21435$\\
$p_3$&$z$     &$L_1,L_4$       &$21435$ &$(2,3)$&$24135$\\
$p_4$&$w$     &$L_1,L_3,L_5$   &$24135$ &$(3,5)$&$24531$\\
$p_5$&$v$     &$L_2,L_4,L_5$   &$24531$ &$(1,3)$&$54231$\\
$p_6$&$u$     &$L_2,L_3$       &$54231$ &$(3,4)$&$54321$
\end{tabular*}
\end{center}
Thus the sequence of reversed position-blocks is
\begin{equation}\label{eq:block-sequence}
 (3,4),\ (1,2),\ (2,3),\ (3,5),\ (1,3),\ (3,4).
\end{equation}
This sweep is used only to compute the transport braids from the
reference fiber on the right.  The numbers of intersection points on
$L_1,L_2,L_3,L_4,L_5$ are $3,3,3,3,2$, respectively.  Hironaka's
line-arrangement relation \cite[Theorem~1.2]{Hironaka2012}, whose interior
factors occur in the reverse order $p_6,\ldots,p_1$, gives the mapping class relation
\eqref{eq:key-relation}.

\smallskip
\noindent\emph{Second proof via elementary relations.}
Let $\gamma$ be the curve in Figure~\ref{fig:curves}(j), separating
$\delta_1,\delta_2,\delta_3,\delta_4$ from $\delta_0,\delta_5$.
Choose the disjoint curves $a_{24}$ and $a_{13}$ shown in
Figure~\ref{fig:curves}(i), cutting off, respectively,
$\{\delta_2,\delta_4\}$ and $\{\delta_1,\delta_3\}$.  The curves
$\lambda$ and $\mu$ in Figure~\ref{fig:curves}(k)--(l) cut off,
respectively, $\{\delta_1,a_{24}\}$ and $\{\delta_3,a_{24}\}$ inside
the component bounded by $\gamma$.
The named curves determine four-holed subsurfaces on which the ordinary
lantern relation gives
\begin{align}
 \DT{\gamma}\DT{a_{24}}\DT{\delta_1}\DT{\delta_3}
   &=\DT{\lambda}\DT{\mu}\DT{a_{13}},
   \label{eq:lantern-one}\\
 \DT{\lambda}\DT{\delta_1}\DT{\delta_2}\DT{\delta_4}
   &=\DT{a_{24}}\DT{z}\DT{\alpha},
   \label{eq:lantern-two}\\
 \DT{\mu}\DT{\delta_2}\DT{\delta_3}\DT{\delta_4}
   &=\DT{\beta}\DT{u}\DT{a_{24}}.
   \label{eq:lantern-three}
\end{align}
Multiplying \eqref{eq:lantern-one} on the left by
\[
 \DT{a_{24}}^{-1}
 \DT{\delta_1}\DT{\delta_2}^{2}
 \DT{\delta_3}\DT{\delta_4}^{2},
\]
and using \eqref{eq:lantern-two} and \eqref{eq:lantern-three}, and repeatedly using that every $t_{\delta_i}$ is central in $\Gamma_0^6$, gives
\begin{align*}
 \DT{\gamma}\prod_{i=1}^{4}\DT{\delta_i}^{2}
 &=
 \DT{a_{24}}^{-1}
 \bigl(\DT{\lambda}\DT{\delta_1}\DT{\delta_2}\DT{\delta_4}\bigr)
 \bigl(\DT{\mu}\DT{\delta_2}\DT{\delta_3}\DT{\delta_4}\bigr)
 \DT{a_{13}}\\
  &=
\DT{a_{24}}^{-1}
 \bigl(\DT{a_{24}}\DT{z}\DT{\alpha}\bigr)
 \bigl(\DT{\beta}\DT{u}\DT{a_{24}}\bigr)
 \DT{a_{13}}\\
 &=
 \DT{z}\DT{\alpha}\DT{\beta}\DT{u}
 \DT{a_{24}}\DT{a_{13}}\\
 &=
 \DT{u}\DT{a_{24}}\DT{a_{13}}
 \DT{z}\DT{\alpha}\DT{\beta}.
 \tag{\(\ast\)}
\end{align*}
The last equality follows by cyclically permuting a factorization of the
boundary multitwist of the subsurface bounded by $\gamma$.  This
multitwist is central in the mapping class group of that subsurface, and
all the factors involved are supported there.

Finally, the component of
$\Sigma\setminus(a_{24}\cup a_{13})$ containing
$\delta_0$ and $\delta_5$ is a four-holed sphere with boundary components
$\delta_0,\delta_5,a_{24},a_{13}$, and its lantern relation is
\begin{equation}\label{eq:lantern-four}
 \DT{\delta_0}\DT{\delta_5}\DT{a_{24}}\DT{a_{13}}
 =
 \DT{\gamma}\DT{v}\DT{w}.
\end{equation}
Multiplying \((\ast)\) on the left by
$\DT{\gamma}^{-1}\DT{\delta_0}\DT{\delta_5}$, and using that
$\gamma$ is disjoint from $u,a_{24},a_{13}$, we obtain
\begin{align*}
 \DT{\delta_0}\DT{\delta_5}
 \prod_{i=1}^{4}\DT{\delta_i}^{2}
 &=
 \DT{u}
 \bigl(\DT{\gamma}^{-1}\DT{\delta_0}\DT{\delta_5}
       \DT{a_{24}}\DT{a_{13}}\bigr)
 \DT{z}\DT{\alpha}\DT{\beta}\\
 &=
 \DT{u}\DT{v}\DT{w}\DT{z}\DT{\alpha}\DT{\beta},
\end{align*}
where the second equality is \eqref{eq:lantern-four}.
\end{proof}

\begin{figure}[H]
\centering
\subfigure[$u$]{%
\begin{tikzpicture}[x=.63cm,y=.63cm]
  \PageSetup \CurveU \DrawHoles
\end{tikzpicture}}
\hspace{.012\textwidth}
\subfigure[$v$]{%
\begin{tikzpicture}[x=.63cm,y=.63cm]
  \PageSetup \CurveV \DrawHoles
\end{tikzpicture}}
\hspace{.012\textwidth}
\subfigure[$w$]{%
\begin{tikzpicture}[x=.63cm,y=.63cm]
  \PageSetup \CurveW \DrawHoles
\end{tikzpicture}}
\hspace{.012\textwidth}
\subfigure[$z$]{%
\begin{tikzpicture}[x=.63cm,y=.63cm]
  \PageSetup \CurveZ \DrawHoles
\end{tikzpicture}}

\smallskip
\subfigure[$\alpha$]{%
\begin{tikzpicture}[x=.63cm,y=.63cm]
  \PageSetup \CurveAlpha \DrawHoles
\end{tikzpicture}}
\hspace{.012\textwidth}
\subfigure[$\beta$]{%
\begin{tikzpicture}[x=.63cm,y=.63cm]
  \PageSetup \CurveBeta \DrawHoles
\end{tikzpicture}}
\hspace{.012\textwidth}
\subfigure[$d$]{%
\begin{tikzpicture}[x=.63cm,y=.63cm]
  \PageSetup \CurveD \DrawHoles
\end{tikzpicture}}
\hspace{.012\textwidth}
\subfigure[$\pi_1$ basis]{%
\begin{tikzpicture}[x=.63cm,y=.63cm]
  \PageSetup \DrawHoles \DrawBasis
\end{tikzpicture}}

\smallskip
\subfigure[$a_{13},a_{24}$]{%
\begin{tikzpicture}[x=.63cm,y=.63cm]
  \PageSetup \CurveAThirteen \CurveATwentyFour \DrawHoles
\end{tikzpicture}}
\hspace{.012\textwidth}
\subfigure[$\gamma$]{%
\begin{tikzpicture}[x=.63cm,y=.63cm]
  \PageSetup \CurveGamma \DrawHoles
\end{tikzpicture}}
\hspace{.012\textwidth}
\subfigure[$\lambda$]{%
\begin{tikzpicture}[x=.63cm,y=.63cm]
  \PageSetup \CurveLambda \DrawHoles
\end{tikzpicture}}
\hspace{.012\textwidth}
\subfigure[$\mu$]{%
\begin{tikzpicture}[x=.63cm,y=.63cm]
  \PageSetup \CurveMu \DrawHoles
\end{tikzpicture}}
\caption{The curves on $\Sigma$.  The first row is the Dehn twist curves in
$B=\DT{u}\DT{v}\DT{w}\DT{z}$.  The second row contains the conjugating
curve $\alpha$, the complementary curve $\beta$, the fixed vanishing cycle
$d$, and the oriented curves $x_1,\ldots,x_5$ generating $\pi_1(\Sigma, *)$.  The third row
shows the auxiliary curves used in the second proof of
Lemma~\ref{lem:key-relation}.}
\label{fig:curves}
\end{figure}

Set
\[
 B=\DT{u}\DT{v}\DT{w}\DT{z},\qquad
 \Delta=\DT{\delta_0}\DT{\delta_1}^{2}\DT{\delta_2}^{2}
 \DT{\delta_3}^{2}\DT{\delta_4}^{2}\DT{\delta_5}.
\]
Then Lemma~\ref{lem:key-relation} reads
\begin{equation}\label{eq:BT}
 B\DT{\alpha}\DT{\beta}=\Delta.
\end{equation}

\begin{proof}[Proof of Theorem~\ref{thm:main}]
Since $\Delta$ is central, \eqref{eq:BT} implies
$[B,\DT{\alpha}\DT{\beta}]=1$.  Conjugating the multitwist by $B$ gives
\[
 \DT{B(\alpha)}\DT{B(\beta)}=\DT{\alpha}\DT{\beta}.
\]
By uniqueness of a multitwist decomposition,
$\{B(\alpha),B(\beta)\}=\{\alpha,\beta\}$.  Every element of
$\Gamma_0^6$ fixes each boundary component, so $B$ cannot
interchange $\alpha$, which cuts off $\delta_1,\delta_2$, with $\beta$,
which cuts off $\delta_3,\delta_4$.  Hence $B(\alpha)=\alpha$ and
$B(\beta)=\beta$, and in particular
\begin{equation}\label{eq:B-commutes}
 [B,\DT{\alpha}]=1.
\end{equation}

For $c\in\{u,v,w,z\}$ and $n\geq0$, put
\[
 c_n=t_\alpha^n(c).
\]
Define
\begin{equation}\label{eq:Pn}
 P_n:=\DT{d}\DT{u_n}\DT{v_n}\DT{w_n}\DT{z_n}.
\end{equation}
By \eqref{eq:B-commutes}, every $P_n$ is a positive factorization of the
fixed mapping class
\begin{equation}\label{eq:fixed-monodromy}
 \phi=\DT{d}B=\DT{d}\DT{u}\DT{v}\DT{w}\DT{z}.
\end{equation}
Every curve in Figure~\ref{fig:curves} separates at least two boundary
components from at least two others.  Thus none is null-homotopic or
boundary-parallel, and the same holds for $u_n,v_n,w_n,z_n$.
This is not the global conjugation of $P_0$ by $t_\alpha^n$:
the minimal intersection number $i(d,\alpha)>0$, while the factor $\DT{d}$ is fixed in \eqref{eq:Pn}.

For elements $g_1,\ldots,g_k$ of a group, let
$N(g_1,\ldots,g_k)$ denote their normal closure.  Let $X_n\to D^2$ be the
allowable Lefschetz fibration prescribed by $P_n$; its total space is Stein
\cite{LoiPiergallini2001,AkbulutOzbagci2001}.  The vanishing-cycle presentation gives
\begin{equation}\label{eq:filling-group}
 \pi_1(X_n)\cong
 \pi_1(\Sigma)\big/N(d,u_n,v_n,w_n,z_n).
\end{equation}
Choose the basepoint and the oriented meridians in
Figure~\ref{fig:curves}(h), where each $x_i$ is oriented counterclockwise
around $\delta_i$, and write
\[
 \pi_1(\Sigma)=F_5=\langle x_1,x_2,x_3,x_4,x_5\rangle.
\]
We read path products from left to right and orient each of
$u,v,w,z,d$ as the positive boundary of the component of its complement
not containing $\delta_0$.  Changing a representative by conjugation or
inversion does not alter its normal closure.  With these conventions, we may
take
\begin{align}
 u&=x_3x_2, &
 v&=x_5x_4x_2,\label{eq:curve-words-1}\\
 w&=x_5x_4x_3x_2x_1x_2^{-1}x_4^{-1}, &
 z&=x_4x_2x_1x_2^{-1},\label{eq:curve-words-2}\\
 d&=x_3x_1.&&\label{eq:curve-words-3}
\end{align}
First quotient by the four vanishing cycles in $B$:
\[
 Q=F_5/N(u,v,w,z).
\]
Set $a=x_1$ and $b=x_2$.  The relations $u=1$, $z=1$, and $v=1$
give successively
\[
 x_3=b^{-1},\qquad
 x_4=ba^{-1}b^{-1},\qquad
 x_5=ab^{-1}.
\]
After these substitutions,
\[
 w=b^{-1}a^2b^{-1},
\]
so the remaining relation $w=1$ is equivalent to $a^2=b^2$.  Hence
\begin{equation}\label{eq:Qab}
 Q\cong\langle a,b\mid a^2=b^2\rangle.
\end{equation}
Setting $r=ab^{-1}$, so that $a=rb$, rewrites
\eqref{eq:Qab} as
\begin{equation}\label{eq:QKlein}
 Q\cong\langle r,b\mid brb^{-1}=r^{-1}\rangle,
\end{equation}
the Klein-bottle group.

Apply the automorphism induced by $t_\alpha^{-n}$ to
\eqref{eq:filling-group}.  We obtain
\begin{equation}\label{eq:move-conjugation}
 \pi_1(X_n)\cong
 Q/N\bigl(\overline{t_\alpha^{-n}(d)}\bigr),
\end{equation}
where the bar denotes the equivalence class of $t_\alpha^{-n}(d)$ in the quotient $Q$. The conjugating curve $\alpha$ is represented by
\[
 p=x_2x_1.
\]
With the orientation and basepoint conventions above,
\[
 t_\alpha^{-n}(x_i)=p^{-n}x_ip^n\qquad(i=1,2),
\]
while $x_3,x_4,x_5$ are fixed.  Since $d=x_3x_1$,
\begin{equation}\label{eq:moving-relator}
 t_\alpha^{-n}(d)=x_3p^{-n}x_1p^n.
\end{equation}
In $Q$, we have $x_3=b^{-1}$.  If $q=a^2=b^2$, then $q$ is
central.  Moreover, since $a=rb$ and $brb^{-1}=r^{-1}$,
\[
 p=ba=r^{-1}q.
\]
Therefore
\begin{align*}
 \overline{t_\alpha^{-n}(d)}
 &=b^{-1}p^{-n}ap^n\\
 &=b^{-1}(r^{-1}q)^{-n}a(r^{-1}q)^n\\
 &=b^{-1}q^{-n}r^na r^{-n}q^n\\
 &=b^{-1}r^na r^{-n}\\
 &=b^{-1}r^{n+1}br^{-n}\\
 &=r^{-(n+1)}r^{-n}\\
 &=r^{-(2n+1)}.
\end{align*}
Combining this with \eqref{eq:move-conjugation} and
\eqref{eq:QKlein}, we obtain
\begin{equation}\label{eq:final-group}
 \pi_1(X_n)\cong
 \left\langle r,b\mid r^{2n+1}=1,\;brb^{-1}=r^{-1}\right\rangle
 \cong \Z/(2n+1)\rtimes_{-1}\Z.
\end{equation}
Here the generator $b$ of the infinite cyclic factor acts on
$\langle r\rangle\cong\Z/(2n+1)$ by inversion.  The element $r$ has
exact order $2n+1$.  Moreover, every finite order element lies in
$\langle r\rangle$, since its image under the projection to the infinite
cyclic factor must be trivial.  Thus the maximal order of a finite order
element is $2n+1$. It follows that the groups $\pi_1(X_n)$ are pairwise
nonisomorphic, and consequently the Stein fillings $X_n$ are pairwise
non-homeomorphic.

Abelianizing \eqref{eq:final-group} gives $2r=0$ and $(2n+1)r=0$, hence
$r=0$ and
\[
 H_1(X_n;\Z)\cong\Z.
\]
The page has Euler characteristic $-4$ and there are five critical points,
so $\chi(X_n)=1$.  An allowable Lefschetz fibration over the disk has a
handle decomposition with only $0$--, $1$--, and $2$--handles.  It follows
that $H_k(X_n;\Z)=0$ for $k\geq3$ and that $H_2(X_n;\Z)$ is free.  Since
$X_n$ is connected and $H_1(X_n;\Z)\cong\Z$, the Euler characteristic
calculation gives
\[
 H_0(X_n;\Z)=H_1(X_n;\Z)=H_2(X_n;\Z)=\Z.
\]

This proves the theorem for the planar contact manifold $(Y,\xi)$ supported
by $(\Sigma,\phi)$.  For $m\geq0$, form
\[
 (Y_m,\xi_m)=(Y,\xi)\#^m(S^1\times S^2,\xi_{\rm std}).
\]
The contact connected sum is supported by the planar Murasugi sum, and Eliashberg's connected-sum theorem
identifies its Stein fillings with boundary connected sums of Stein fillings
of the summands \cite{Eliashberg1991}.  In particular,
\[
 X_{n,m}=X_n\,\natural\,\bigl(\natural^m(S^1\times D^3)\bigr),
 \qquad
 \pi_1(X_{n,m})\cong\pi_1(X_n)*F_m.
\]
The manifolds $Y_m$ are pairwise non-homeomorphic since their first Betti
numbers are distinct.  Every finite-order element of a free product is
conjugate into a factor, so the largest finite order in $\pi_1(X_{n,m})$ is
$2n+1$.  Thus, for each $m$, the fillings $X_{n,m}$ are pairwise
non-homeomorphic.  Of course, one may more generally take a planar Murasugi
sum with any fixed planar contact $3$--manifold admitting a Stein filling; the corresponding boundary connected sums again yield
infinitely many pairwise non-homeomorphic Stein fillings.
\end{proof}

\begin{remark}
The fillings $X_n$ are related by fibered Luttinger surgeries \cite{baykur-inequivalent}.  Indeed,
choose a simple closed curve $\gamma\subset D^2$ enclosing precisely the
four critical values whose vanishing cycles are $u_n,v_n,w_n,z_n$.
With the chosen distinguished paths, the monodromy around $\gamma$ is
$B$.  Let $S_\alpha$ denote the component of
$\Sigma\setminus\alpha$ containing $\delta_1$ and $\delta_2$.
Since $B(\alpha)=\alpha$ and $B$ fixes every boundary component,
$B$ preserves $S_\alpha$, and therefore preserves the orientation of
$\alpha=\partial S_\alpha$.  It follows that the parallel transport  of $\alpha$ over $\gamma$ is a fibered Lagrangian torus
$T_\alpha$. Passing from $P_n$ to $P_{n+1}$ is the partial conjugation of the four-factor subword by $t_\alpha$, and corresponds to a
Luttinger surgery along $T_\alpha$ with $\alpha$ as the surgery curve.

It is natural to ask whether, for every planar contact $3$--manifold, its Stein
fillings form only finitely many equivalence classes under the relation generated
by such surgeries.
\end{remark}

\medskip
\noindent \textit{Acknowledgements.}
The author was partially supported by NSF grant 
DMS-2506431 and a Simons Foundation Travel Grant.

\bigskip
\bibliography{refs}
\bibliographystyle{plain}

\end{document}